\newcommand{\R}{\mathbb{R}}
\newcommand{\Q}{\mathbb{Q}}
\newcommand{\Z}{\mathbb{Z}}
\newcommand{\bT}{\mathbf{T}}
\newcommand{\Lim}{\varprojlim} 
\newcommand{\larr}{\left( \begin{array}{c}}
\newcommand{\rarr}{\end{array} \right) }
\newcommand{\lsqarr}{\left[ \begin{array}{c}}
\newcommand{\rsqarr}{\end{array} \right]}
\newtheorem{theorem}{Theorem}
\newtheorem{definition}[theorem]{Definition}
\newtheorem{lemma}[theorem]{Lemma}
\begin{document}

\title{Exact Regularity and the Cohomology of Tiling Spaces}
\author{Lorenzo~Sadun}

\subjclass[2000]{Primary: 37B50, 54H20,
Secondary: 37B10, 55N05, 55N35, 52C23.}
\keywords{Pisot Conjecture, Substitution, Tiling Space, Kronecker
Flow, Inverse Limit Space.}

\begin{abstract} Exact regularity was introduced recently as a
  property of homological Pisot substitutions in one dimension. In
  this paper, we consider the analog of exact regularity for arbitrary
  tiling spaces. Let $\bT$ be a $d$ dimensional repetitive tiling, and
  let $\Omega$ be its hull. If $\check H^d(\Omega, \Q) =
  \Q^k$, then there exist $k$ patches whose appearance govern the
  number of appearances of every other patch. This gives uniform
  estimates on the convergence of all patch frequencies to the ergodic
  limit. If the tiling $\bT$ comes from a substitution, then we can
  quantify that convergence rate. If $\bT$ is also one-dimensional, we
  put constraints on the measure of any cylinder set in
  $\Omega$.
\end{abstract}

\maketitle

\section{Introduction and Statement of Results}\label{sec:intro}

Ever since the seminal paper of Anderson and Putnam \cite{AP}, there 
has been a small industry devoted to computing topological invariants
of tiling spaces. A key question throughout this effort has been ``what 
do these invariants actually mean?'' Put another way, if we determine (say)
that the first cohomology of a 1-dimensional tiling space is $\Z[1/2] \oplus
\Z$, what does that tell us about the properties of tilings in that 
space? Progress has been made, relating tiling cohomology to gap labeling
\cite{BBG}, to deformations of tilings \cite{CS}, and to measures and 
patch frequencies \cite{Jean-Marc}. In this paper we continue this last
direction of inquiry and show how the top-dimensional cohomology governs
not only patch frequencies over an entire tiling, but the number of 
appearances of a patch in any finite region. 

Exact regularity was introduced
in \cite{HPC}. In that paper, we considered one-dimensional
substitution tilings with tile lengths chosen according to the left
eigenvector of the substitution matrix, where the stretching factor $\lambda$
is a Pisot number of algebraic degree $k$. 
If the rank of the first rational (\v Cech) cohomology
of the tiling space is also $k$, then the number of appearances of
a patch $P$ in a return word is determined {\em exactly} by the Euclidean length
(in $\R^1$) of the return word.  

The proof never used the condition that $\lambda$ was a Pisot
number. It did rely on the tiling being 1-dimensional, and on the
dimension of $\check H^1$ equaling the algebraic degree of the
stretching factor.  The point is that, under these conditions, the
integer span of the tile lengths is a rank-$k$ free Abelian group, so
that specifying the length (in $\R$) of a return word is equivalent to
specifying $k$ integers. These $k$ integers
then determine how many times $P$ appears, up to a
boundary term. For appropriately chosen return words, the contributions of 
the two boundaries cancel and we are left with an exact formula. Tilings for
which this exact formula 
works are said to have the {\em Exact Regularity Property}, or ERP.

In this paper we study the situation where the dimension
of $\check H^1$ is unrelated to the algebraic degree of the stretching 
factor, or where the tiling does not come from a substitution at all, or 
when the tiling is of a higher-dimensional Euclidean space. 
In each of these cases the rank
of the top cohomology determines how many integers 
are needed to keep track of an arbitrary patch $P$ in an arbitrary region $R$
(Theorem \ref{general}).  
The formulas are generally 
{\em not} exact, but involve correction terms that depend
on the boundary of $R$. However, this is sufficient
to prove estimates on the rate at which the frequency of $P$ approaches 
its ergodic average (Theorems \ref{ergodic} and \ref{sub-rate}).

In some cases (e.g., substitution
tilings in one dimension, and some special 2-dimensional examples) it is 
possible to choose a region where the boundary terms vanish. This allows 
us to set constraints on the possible measures of cylinder sets in 
1-dimensional substitution tilings (Theorem \ref{cylinder}).

Let $\bT_0$ be a tiling of $\R^d$ that is translationally finite and
repetitive. Being translationally finite (also called having finite
local complexity) means that there are only a finite number of tile
types, up to translation and there are only a finite number of ways
that two tiles can meet. Being repetitive means that, for every patch
$P$, there an 
$r_P$ such that in every ball of radius $r_P$ there is at least
one copy of $P$. Note that these assumptions exclude the pinwheel
tiling, where tiles point in an infinite number of directions, and all
tilings where patterns appear with frequency zero (e.g., a one
dimensional tiling with one black tile and infinitely many white
tiles). It is possible to address the pinwheel tiling, but the techniques
are somewhat different. See the end of Section \ref{examples}.

The {\em hull} $\Omega$ of $\bT_0$, also called the tiling space
associated with $\bT_0$, is the set of tilings $\bT$ with the property
that every patch in $\bT$ is a translate of a patch in $\bT_0$. Under
the above assumptions, $\Omega$ is a minimal dynamical system with
respect to translations and is a compact space, with a metric where
two tilings are close if they agree on a big ball centered at the
origin, up to a small translation. If $\bT$ is in the hull of $\bT_0$,
then the hull of $\bT$ is the same as the hull of $\bT_0$. For this
reason, we usually speak of a minimal tiling space $\Omega$, rather
than the hull of any one particular tiling.

\begin{theorem}\label{general} 
Let $\Omega$ be a compact and minimal space of translationally finite
tilings of $\R^d$.
Suppose that $\check H^d(\Omega, \Q)
= \Q^k$ for some integer $k$. Then there exist patches $P_1, \ldots, P_k$
with the following property:  For any other patch $P$ 
there exist rational numbers $c_1(P), \ldots, c_k(P)$ such that, for any
region $R$ in any tiling $\bT \in \Omega$, 
the number of appearances of $P$ in $R$ equals 
$\sum_{i=1}^k c_i(P) n_i + e(P,R)$, where $n_i$ is the number of appearances
of $P_i$ in $R$, and $e(P,R)$ is an error term computable from the patterns
that appear 
on the boundary of $R$. In particular, the magnitude of $e(P,R)$ is bounded
by a constant times the measure of the boundary of $R$. 
\end{theorem}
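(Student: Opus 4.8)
The plan is to reinterpret the top \Cech cohomology as the space of patch-counting functions taken modulo boundary terms, using an inverse-limit presentation of $\Omega$. First I would present the hull (assuming, as is standard, finite local complexity so that the approximants exist) as an inverse limit $\Omega = \Lim (\Gamma_n, f_n)$ of finite CW complexes, as in the Anderson--Putnam construction or its G\"ahler refinement by collared tiles, so that $\check H^d(\Omega,\Q) = \dlim (H^d(\Gamma_n,\Q), f_n^*) = \Q^k$. The point of the collared construction is that the top-dimensional cells of $\Gamma_n$ are precisely the collared tiles at level $n$, which are genuine patches; a $d$-cochain $\alpha \in C^d(\Gamma_n,\Q)$ is then nothing but an assignment of a rational weight to each such patch type, and evaluating $\alpha$ on a region $R$ of a tiling $\bT$ means summing the weights of the collared tiles lying in $R$. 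Because $\Gamma_n$ has no cells of dimension $d+1$, every $d$-cochain is a cocycle and $H^d(\Gamma_n,\Q) = C^d(\Gamma_n,\Q)/\delta C^{d-1}(\Gamma_n,\Q)$.

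The heart of the argument is a discrete Stokes theorem identifying what a coboundary contributes to a patch count. I would show that if $\alpha = \delta\beta$ for some $\beta \in C^{d-1}(\Gamma_n,\Q)$, then the evaluation of $\alpha$ over a region $R$ telescopes: the contributions of all interior $(d-1)$-faces cancel in pairs, and what survives is a sum of values of $\beta$ over the faces meeting the boundary $\partial R$. Thus a coboundary, integrated over $R$, produces only a boundary term, whose size is at most a fixed constant (the largest absolute value among the finitely many weights of $\beta$) times the number of cells within bounded distance of $\partial R$, hence a constant times the $(d-1)$-measure of $\partial R$. This is the step I expect to be the main obstacle: making the combinatorial cancellation precise while correctly matching the abstract coboundary operator on $\Gamma_n$ to the geometry of counting tiles in a region with boundary, and keeping careful track of the occurrences that straddle $\partial R$.

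With this in hand the theorem follows formally. Since $H^d(\Omega,\Q)=\Q^k$, I would choose collared tiles $P_1,\dots,P_k$, at a sufficiently deep level $n$, whose counting cochains $\chi_{P_1},\dots,\chi_{P_k}$ represent a basis of the cohomology. Given any other patch $P$, pass to a level fine enough that $P$ is recognized, so that counting appearances of $P$ is given by a $d$-cochain $\chi_P$, a combination of collared-tile indicators. Its class in $\dlim H^d(\Gamma_n,\Q)=\Q^k$ is a $\Q$-linear combination $[\chi_P] = \sum_i c_i(P)[\chi_{P_i}]$ with $c_i(P)\in\Q$, since the coordinates of a vector of $\Q^k$ relative to a $\Q$-basis are rational. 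Equality of classes means $\chi_P - \sum_i c_i(P)\chi_{P_i} = \delta\beta_P$ for a fixed $\beta_P \in C^{d-1}(\Gamma_n,\Q)$. Evaluating this identity over $R$ and invoking the discrete Stokes step, the bulk terms give the number of appearances of $P$ in $R$ equal to $\sum_i c_i(P) n_i$, while the coboundary contributes the boundary error $e(P,R)$, computable from the patterns on $\partial R$ and bounded by a constant times the measure of $\partial R$. Minimality is used only to guarantee that the same patches appear in every $\bT\in\Omega$, so that the statement is uniform over the hull.
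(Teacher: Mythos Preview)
Your proposal is correct and follows essentially the same line as the paper: write the indicator cochain of $P$ as a rational combination of $k$ basis indicator cochains plus a coboundary, then evaluate on $R$ and use Stokes. The only difference is that the paper works directly in pattern-equivariant cohomology on $\bT$ rather than through the Anderson--Putnam inverse limit, so the step you flag as the main obstacle---the discrete Stokes identity---is literally $(\delta\alpha)(R)=\alpha(\partial R)$ on the CW structure of the tiling and needs no separate argument.
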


Note that Theorem \ref{general} is not limited to substitution
tilings, nor even to non-periodic tilings, but applies to any
compact and minimal tiling space. For instance, if $\bT_0$
is a periodic tiling, then $\Omega$ is a $d$-dimensional torus and 
$\check H^d(\Omega, \Q)=\Q$, so we only need to count one patch 
(say, a fundamental domain) to determine the number of all other 
patches. The Penrose tiling
has $\check H^2(\Omega, \Q)=\Q^8$, so there are eight patches whose 
appearance controls the appearance of all other patches. Further examples
are given in Section \ref{examples}.

This theorem has immediate
implications for the existence of ergodic limits, and for the rate of
convergence to those limits.

\begin{theorem}\label{ergodic} Suppose that a tiling space $\Omega$ 
satisfies the conditions of Theorem \ref{general}, and suppose that the
patches $P_1, \ldots, P_k$ occur with well-defined frequencies 
$f_1, \ldots, f_k$. That is, the number of occurences of $P_i$ in any 
ball of volume $V$, divided by $V$, approaches $f_i$ as $V \to \infty$. 
Then $\Omega$ is uniquely ergodic, and the frequency of any patch
$P$ approaches $\sum c_i(P)f_i$ at least as quickly as the slowest of the
$P_i$'s, or as $V^{-1/d}$, whichever is slower. 
\end{theorem}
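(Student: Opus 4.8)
The plan is to apply Theorem~\ref{general} directly to balls of growing volume and then to control the boundary error term by an elementary geometric estimate. Fix a patch $P$, a tiling $\bT \in \Omega$, and a ball $B_V \subset \R^d$ of volume $V$; write $N(P, B_V)$ for the number of appearances of $P$ in $B_V$ and $n_i(B_V)$ for the number of appearances of $P_i$. Theorem~\ref{general} then gives
$$N(P, B_V) = \sum_{i=1}^k c_i(P)\, n_i(B_V) + e(P, B_V), \qquad |e(P,B_V)| \le C_P\,|\partial B_V|,$$
where $C_P$ does not depend on $\bT$ or on the center of the ball. Dividing by $V$ and letting $V \to \infty$, the first term tends to $\sum_i c_i(P) f_i$ by the frequency hypothesis on the $P_i$. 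For the error term I would use that a ball of volume $V$ has boundary of surface measure of order $V^{(d-1)/d}$, so that $|e(P,B_V)|/V = O(V^{-1/d}) \to 0$. Hence the frequency of $P$ exists and equals $\sum_i c_i(P) f_i$.

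For the rate, I would subtract the limit and split the deviation into its two natural sources,
$$\left| \frac{N(P, B_V)}{V} - \sum_i c_i(P) f_i \right| \;\le\; \sum_i |c_i(P)|\left| \frac{n_i(B_V)}{V} - f_i \right| + \frac{|e(P,B_V)|}{V}.$$
The first sum decays no faster than the slowest of the convergence rates of the individual $f_i$, while the second is $O(V^{-1/d})$. The overall rate is therefore controlled by whichever of these two bounds decays more slowly, which is exactly the claim that the frequency of $P$ approaches $\sum_i c_i(P) f_i$ as quickly as the slowest of the $P_i$, or as $V^{-1/d}$, whichever is slower.

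It remains to deduce unique ergodicity, and here I would invoke the standard criterion that a minimal tiling space is uniquely ergodic precisely when every patch frequency exists uniformly---uniformly over the center of the averaging ball and over the choice of tiling in the hull. The decisive feature is that the estimate in Theorem~\ref{general} is uniform: the constant $C_P$ and the bound $|\partial B_V| = O(V^{(d-1)/d})$ are independent of $\bT$ and of the position of $B_V$. Consequently, once the reference frequencies $f_1,\ldots,f_k$ are attained uniformly---which is built into the hypothesis that the ratio approaches $f_i$ for \emph{any} ball of volume $V$, and is consistent across the hull by minimality---the frequency of every patch $P$ is attained uniformly as well. Since the patch-indicator functions span a dense subalgebra of the continuous functions on $\Omega$, uniform convergence of all patch frequencies forces all Birkhoff averages to converge to a single value, so there is a unique invariant probability measure.

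The main obstacle I anticipate is precisely this last bridge, from ``uniform patch frequencies'' to ``unique ergodicity.'' The hypothesis is phrased only for balls, whereas unique ergodicity is usually stated through arbitrary F\o lner sequences or through all continuous observables. I would navigate this by working on the dense subalgebra generated by patch indicators, where uniform ball-averages suffice, and then extending to all continuous functions by density, using that any invariant measure on $\Omega$ is determined by the measures it assigns to the cylinder sets corresponding to patches. The frequency computation itself is routine; the care lies entirely in tracking uniformity.
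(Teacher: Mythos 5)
Your argument is essentially identical to the paper's: both apply Theorem \ref{general} to a ball, divide by $V$, and split the deviation as $\frac{e(P,R)}{V} + \sum_i c_i(P)\bigl(\frac{n_i}{V} - f_i\bigr)$, with the boundary term contributing $O(V^{-1/d})$ and the remaining terms decaying at the rate of the slowest $P_i$. The paper treats the passage from uniform existence of all patch frequencies to unique ergodicity as immediate, whereas you spell it out via the dense subalgebra of patch indicators; that extra detail is correct and does not change the route.
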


In particular, a uniquely ergodic tiling space whose patch frequencies do
not converge uniformly must have infinitely generated rational top cohomology.

This theorem is in some sense dual to the results of \cite{Jean-Marc},
who study possible measures on tilings spaces by considering frequencies
of (possibly collared) tiles and requiring that they satisfy a set of
homological ``Kirchoff's Rules''. Knowing the (co)homology of the 
tiling space then puts constraints on the possible invariant measures.

When the tiling comes from a substitution, we can further 
sharpen our convergence estimates. 
Let $\bT$ be a repetitive and non-periodic tiling derived from a
primitive self-similar substitution.
Let $M$ be the matrix of the substitution. That is,
$M_{ij}$ is the number of times that the $i$-th tile type appears in the
substitution of the $j$-th tile type. Arrange the eigenvalues $\lambda_i$ 
of $M$ in decreasing order of size. 
Note that $\lambda_1$ is real and positive
and is strictly larger than $|\lambda_2|$. 

\begin{theorem}\label{sub-rate} If $\bT$ is a self-similar tiling as above, 
and if $P$ is any patch in $\bT$,
then there exist constants $K$ and $\nu$ such that, for any ball $R$ 
of sufficiently large volume $V$, 
\begin{equation} \left | \frac{\hbox{number of $P$'s in $R$}}{V}
- \sum c_i(P) f_i \right | < K V^{-\gamma}(\log V)^\nu,
\end{equation}
where $\gamma = \min(d^{-1}, 1 - \frac{\log |\lambda_2|}{ \log(\lambda_1)})$.
\end{theorem}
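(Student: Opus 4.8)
The plan is to combine two ingredients: the exact regularity from Theorem~\ref{general}, which handles the "bulk" contribution $\sum c_i(P) n_i$, and a substitution-specific estimate on the error term $e(P,R)$ together with the rate at which the counts $n_i$ and $f_i$ converge. The key observation is that Theorem~\ref{general} already reduces everything to understanding two sources of deviation from the ergodic limit: first, how fast $n_i/V$ converges to $f_i$ for the finitely many reference patches $P_1,\ldots,P_k$; and second, how large the boundary error $e(P,R)$ can be relative to $V$. Both of these can be controlled using the eigenvalue structure of $M$.

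\begin{proof}[Proof sketch]
First I would address the boundary term. By Theorem~\ref{general}, $|e(P,R)|$ is bounded by a constant times the measure of $\partial R$. For a ball $R$ of volume $V$ in $\R^d$, the boundary has measure on the order of $V^{(d-1)/d}$, so $|e(P,R)|/V = O(V^{-1/d})$. This accounts for the $d^{-1}$ appearing in $\gamma$, and is exactly the generic error one expects from surface-to-volume ratio.

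Next I would estimate the convergence of $n_i/V$ to $f_i$, which is where the substitution structure enters. The plan is to use the self-similarity: applying the substitution $n$ times inflates lengths by $\lambda_1^{n/d}$ (so volumes by $\lambda_1^n$), and the vector of patch counts evolves under $M$ (or an associated matrix on collared patches). Writing the count vector in the eigenbasis of $M$, the Perron-Frobenius eigenvalue $\lambda_1$ governs the leading term proportional to $V$, while the next eigenvalue $\lambda_2$ governs the leading correction, of size $|\lambda_2|^n = (\lambda_1^n)^{\log|\lambda_2|/\log\lambda_1} = V^{\log|\lambda_2|/\log\lambda_1}$. Dividing by $V$ gives a relative error of order $V^{\log|\lambda_2|/\log\lambda_1- 1}$, i.e.\ $V^{-(1 - \log|\lambda_2|/\log\lambda_1)}$, which supplies the second term in $\gamma$. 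Taking the worse (slower) of these two rates yields the stated exponent $\gamma = \min(d^{-1}, 1 - \log|\lambda_2|/\log\lambda_1)$.

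The main obstacle is making the second estimate uniform over \emph{arbitrary} large balls $R$, not just over exact supertiles obtained by iterating the substitution. A general ball does not decompose cleanly into level-$n$ supertiles: it will contain some whole supertiles in its interior and a shell of partial supertiles near its boundary. I would handle this by choosing $n$ so that a single level-$n$ supertile has diameter comparable to $V^{1/d}$, tiling the bulk of $R$ by such supertiles, and absorbing the mismatched fringe into the boundary term already estimated above. The exact regularity of Theorem~\ref{general} is what makes this work: because patch counts in any region are determined up to a boundary error by the reference counts $n_i$, I need only track the $n_i$ through the substitution, and the fringe contributes only to $e(P,R)$. Care is needed to verify that the constant $K$ can be taken uniform once $V$ is large enough, and that the collared-patch substitution matrix shares the spectral radius $\lambda_1$ and second eigenvalue $\lambda_2$ with $M$; this is standard for primitive substitutions but should be stated explicitly.
\end{proof}
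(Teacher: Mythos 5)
Your overall strategy is the same as the paper's: use Theorem~\ref{general} to reduce the count of $P$ to the counts $n_i$ of the reference patches plus a boundary error of order $V^{(d-1)/d}$, then control $n_i/V - f_i$ by the spectral gap of the substitution matrix, with the second eigenvalue supplying the exponent $\log|\lambda_2|/\log\lambda_1$ and the fringe of partial supertiles absorbed into the surface term. This is precisely the content of equation~(\ref{estimate1}), and the paper's proof is essentially the observation that the class of $\chi_P$ is a linear combination of tile indicator classes, so that estimate finishes the job.

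The one genuine gap is your closing claim that the collared substitution matrix shares its second eigenvalue with $M$, which you defer to as ``standard.'' It is not standard, and it is false in general: rewriting in collared tiles can introduce eigenvalues that $M$ does not have, and one of these can exceed $|\lambda_2|$ in modulus, in which case the exponent extracted from the collared matrix would be strictly worse than the one claimed in the theorem. The paper's fix is a quantitative bound on the new eigenvalues rather than a preservation statement: two tiles that agree as ordinary tiles but differ as collared tiles yield substituted patches that can disagree only near their boundaries, so any eigenvalue $\lambda'$ of the collared matrix that is not an eigenvalue of $M$ satisfies $|\lambda'|^d \le \lambda_1^{d-1}$, i.e.\ $|\lambda'| \le \lambda_1^{(d-1)/d}$. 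The corresponding error term is therefore $O(V^{(d-1)/d})$ and is dominated by the boundary term you have already accounted for, so the stated $\gamma$ (defined via the second eigenvalue of the \emph{original} $M$) survives. Without this estimate your argument is incomplete at exactly the point you flagged; the remainder of the proposal matches the paper's proof.
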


We obtain our strongest results when the tiling is 1-dimensional and comes 
from a substitution. 
Let $\Omega$ be a 1-dimensional tiling space coming
from a primitive 
substitution $\phi$. The top cohomology  $\check H^1(\Omega, \Q)=\Q^k$ 
is finitely generated \cite{BD}. 
The substitution $\phi$ maps $\Omega$ to itself, and therefore
maps $\check H^1(\Omega, \Q)$ to itself. This last action can be expressed by 
a non-singular $k \times k$ integer matrix $A$. 
(Note: $A$ is typically different from 
the matrix $M$ of the substitution, but both matrices have the same 
leading eigenvalue \cite{BD}, namely the stretching factor $\lambda$.)

The {\em minimal polynomial} of a matrix $A$ is the lowest order monic
polynomial $p(x)$ for which $p(A)=0$. Likewise, the minimal polynomial
of the leading eigenvalue $\lambda$ is the lowest order monic
polynomial $q(x)$ for which $q(\lambda)=0$.  Since $p(\lambda)=0$,
$q(x)$ divides $p(x)$ and we can write $p(x) = q(x) r(x)$ for some
integer polynomial $r(x)$. If $A$ is primitive, then the polynomials
$q(x)$ and $r(x)$ have no roots in common, so we can find a nonzero
integer $D$ and polynomials $Q(x)$ and $R(x)$ with integer
coefficients such that $Q(x)q(x) + R(x) r(x) = D$.  The smallest such
integer $D$ is called the {\em congruence number} \cite{Weise-Taixes}
or {\em reduced resultant \cite{Pohst}} of $q(x)$ and $r(x)$.

Connected patches in a 1-dimensional tiling correspond to sequences of letters,
also called words, where each letter designates a tile type. If the
word $\ell_1 \ldots \ell_n \ell_{n+1}$ occurs in the tiling,
and if $\ell_1=\ell_{n+1}$, then $\ell_1 \ldots \ell_n$ is called a {\em 
return word} and the total length of the tiles corresponding to
the letters $\ell_1 \ldots \ell_n$ is called a {\em return length}. 

\begin{theorem}\label{cylinder}
Let $\Omega_\phi$ be a tiling space obtained from a primitive
one-dimensional substitution 
$\phi$, let $\lambda$ be the stretching factor of $\phi$, let 
$L>0$ be a return length, let $\bT \in \Omega_\phi$, and let
$P$ be any patch in $\bT$. Then the frequency of $P$ in $\bT$
takes the form
\begin{equation}
f(P) = \frac{u_P(\lambda)}{LD q'(\lambda) q_0^n},
\end{equation} 
where $u_P(x)$ is a polynomial with integer coefficients, $q(x)$ is
the minimal polynomial of $\lambda$, $q'(x)$ is its derivative, 
$q_0$ is the constant coefficient of 
$q(x)$, and $D$ is the reduced resultant of $q(x)$ and $r(x)$. 
\end{theorem}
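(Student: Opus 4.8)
The plan is to reduce the frequency of an arbitrary patch to the frequencies of the generating patches $P_1,\ldots,P_k$, recognize those frequencies as the coordinates of a single eigenvector of the cohomology action $A$, and then track exactly which denominators appear when that eigenvector is written as an element of $\Q(\lambda)$. By Theorem~\ref{ergodic} we may write $f(P)=\sum_i c_i(P)f_i$ with $c_i(P)\in\Q$, so it suffices to control the vector $\vec f=(f_1,\ldots,f_k)$ and to verify that clearing the rational coefficients $c_i(P)$ against the common denominator still leaves an integer numerator. The first genuine step is to show that $\vec f$ spans the $\lambda$-eigenline of $A$: applying $\phi$ scales all lengths by $\lambda$ while acting on $\check H^1(\Omega_\phi,\Q)$ by $A$, so for a region $R$ the count vector transforms as $\vec n(\phi(R))=A\,\vec n(R)$ up to a boundary term, and dividing by $|\phi(R)|=\lambda|R|$ and passing to the (unique, by Theorem~\ref{ergodic}) ergodic limit gives $A\vec f=\lambda\vec f$. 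Thus $\vec f$ is determined up to an overall scalar.

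Next I would extract that eigenline by two successive projections whose denominators are exactly the quantities in the statement. Since the minimal polynomial of $A$ is $p=qr$ with $q,r$ coprime, the Bezout identity $Q(x)q(x)+R(x)r(x)=D$ shows that $R(A)r(A)$ is an integer matrix equal to $D$ times the projection onto $\ker q(A)$ along $\ker r(A)$; this is where the reduced resultant $D$ enters the denominator. Because $q$ is irreducible over $\Q$ it is separable, so $A$ restricted to $\ker q(A)$ is diagonalizable with $\lambda$ a simple root of its minimal polynomial. Setting $g(x)=q(x)/(x-\lambda)$, the matrix $g(A)$ kills every conjugate eigenline and scales the $\lambda$-eigenline by $g(\lambda)=q'(\lambda)$, so $g(A)/q'(\lambda)$ projects onto it. The coefficients of $g$ lie in $\Z[\lambda]$ (synthetic division of the monic $q$ by $x-\lambda$), so the composite $\lambda$-eigenprojection is $\tfrac{1}{D\,q'(\lambda)}\,g(A)R(A)r(A)$ with $g(A)R(A)r(A)$ having entries in $\Z[\lambda]$. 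Applying it to any integer reference vector shows $\vec f$, up to scale, has entries in $\tfrac{1}{D\,q'(\lambda)}\Z[\lambda]$.

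It remains to pin down the overall scale, and this is where $L$ and $q_0^{\,n}$ appear. I would use the return word $\ell_1\cdots\ell_n$ of length $L$: since it begins and ends at the same letter it closes to a loop in the Anderson--Putnam complex, so the count of any patch in it is a topological pairing and hence an exact integer with vanishing boundary term, i.e.\ the case $e(P,R)=0$ of Theorem~\ref{general}. This exactness lets me normalize $\vec f$ by pairing the length functional to $1$ per unit length against the return word, which contributes length $L$ and an integer count vector; solving for the scalar produces the factor $L$ in the denominator. Finally, re-expressing the positional and length data of the $n$ tiles of the return word in integral form forces one inversion of $\lambda$ per tile, and since $\lambda$ is an algebraic integer with $q(\lambda)=0$ we have $\lambda^{-1}=-(\lambda^{\deg q-1}+\cdots+q_1)/q_0\in\tfrac1{q_0}\Z[\lambda]$, so the $n$ inversions contribute exactly $q_0^{\,n}$. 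Assembling the three denominators and clearing the $c_i(P)$ yields $f(P)=u_P(\lambda)/(L\,D\,q'(\lambda)\,q_0^{\,n})$ with $u_P\in\Z[x]$.

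The main obstacle I expect is the denominator bookkeeping: verifying that the three sources combine to \emph{exactly} $L\,D\,q'(\lambda)\,q_0^{\,n}$ and no more. In particular I must check that the $n$-fold inversion of $\lambda$ in the return-word normalization contributes precisely $q_0^{\,n}$ rather than a higher power, that the rational denominators of the $c_i(P)$ are absorbed into $u_P$ rather than enlarging the stated denominator, and that the separate factors do not secretly cancel. Confirming that these fit together so that the numerator $u_P$ genuinely has integer coefficients is the delicate part of the argument.
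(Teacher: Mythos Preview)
Your high-level architecture matches the paper's: use the Bezout identity $Qq+Rr=D$ to produce the factor $D$, project onto the $\lambda$-eigenspace to produce $q'(\lambda)$, and normalize against a return word to produce $L$. But there is a genuine gap, and it is precisely the one you flag at the end without resolving.

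The difficulty is that you route everything through $f(P)=\sum_i c_i(P)f_i$. The coefficients $c_i(P)\in\Q$ carry denominators you have no control over; they depend on how $[\chi_P]$ sits inside the $\Q$-span of $[\chi_1],\ldots,[\chi_k]$, and nothing in your eigenprojection argument bounds them. Even if you show that $\vec f$ lies in $\tfrac{1}{LDq'(\lambda)q_0^{\,n}}\Z[\lambda]^k$, this says nothing about $\sum_i c_i(P)f_i$ once the $c_i(P)$ are arbitrary rationals. The paper avoids this by never decomposing $\chi_P$ in a rational basis of indicator cochains. Instead it introduces the notion of a \emph{regular} cochain---one whose value on $\phi^m(\ell)$ is an integer for every letter $\ell$ and all large $m$---and observes that $\chi_P$, and hence any integer polynomial in $A$ applied to $\chi_P$, is regular. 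Then $R(A)r(A)\chi_P$ is a regular cochain annihilated by $q(A)$, and the paper proves directly (its Lemma~\ref{lem1}) that any regular cochain in $\ker q(A)$ has trace of the stated form; the companion piece $Q(A)q(A)\chi_P$ lies in $\ker r(A)$ and has trace zero. This integrality input---values on substituted tiles are integers---is the mechanism that forces the numerator into $\Z[\lambda]$, and it is exactly what your argument lacks.

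There is also a misreading of $q_0^{\,n}$. The exponent $n$ in the statement is not the number of tiles in the chosen return word; it is simply some nonnegative integer. In the paper it arises because the regular cochain, expanded in a basis of ``length cochains'' $\xi_0,\ldots,\xi_{d-1}$ built from writing each tile length as $L(c_0+c_1\lambda+\cdots+c_{d-1}\lambda^{d-1})$, has $i$-th coefficient lying in $q_0^{-i}\Z$; this is extracted by evaluating on $\phi^i(w)$ and using $q_0^{\,n-i}\lambda^i\in\Z\lambda^n+\cdots+\Z\lambda^{n+d-1}$. Your ``one inversion of $\lambda$ per tile of the return word'' heuristic is not the actual mechanism and would give the wrong dependence.
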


This theorem has a very different flavor from Theorems
\ref{general}--\ref{sub-rate}. The previous theorems are ergodic in
nature and describe convergence to an ergodic limit, while Theorem 
\ref{cylinder} is algebraic and puts constraints on what that limit can be. 

In Section \ref{background} we
provide necessary background for understanding the proofs of these
theorems. In Section \ref{proofs} we prove the four theorems, and in
Section \ref{examples} we provide examples that illustrate how exact
regularity works in practice.

\section{Background}\label{background}

A tile is a topological disk that is the closure of its interior,
together with a label. A tiling of $\R^d$ is a collection of tiles
that overlap only on their boundaries, whose union is all of $\R^d$.
A {\em patch} is a finite collection of tiles that intersect only on
their boundaries, and the support of a patch is the union of these tiles. 
$[{B}_r]^\bT$ denotes the patch consisting of all tiles that 
intersect a closed ball of
radius $r$ around the origin in the tiling $\bT$. The translation group
acts naturally on tilings by moving each tile simultaneously. If $\bT$
is a tiling, then $\bT-x$ is the same tiling translated by the vector
$x$, so that a neighborhood of the origin in $\bT-x$ looks like a
neighborhood of $x$ in $\bT$. 

A tiling {\em space} $\Omega$ 
is a topological space whose elements are tilings, 
with a metric topology in which two tilings are close if they agree on
a big ball around the origin, up to a small translation. In this 
paper we assume throughout 
that $\Omega$ is compact and is a minimal dynamical system
with respect to translations. This is equivalent to $\Omega$ being the
hull of a tiling that is translationally finite (for compactness) and 
repetitive (for minimality). 

Every compact tiling space is topologically conjugate to another tiling space
whose tiles are polytopes. Without loss of generality, we henceforth assume
that all of our tiles are polytopes. 

A self-similar substitution is a map that replaces each tile with a patch
whose support is the original tile, scaled by a {\em linear stretching factor}
$\lambda_0$. This action extends by concatenation to patches, and indeed
to tilings. A {\em supertile of order $n$} is a patch obtained by applying
the substitution $n$ times to a tile. 

The {\em matrix of the substitution} (also called the {\em abelianization} when
the tilings in question are 1-dimensional) is the matrix $M$ whose entries
$M_{ij}$ count the number of times that the $i$-th tile type appears in a 
substituted $j$ tile.  
A square matrix $A$ with non-negative entries 
is {\em primitive} if all the entries of
some power $A^n$ are positive. Every primitive matrix has a largest
positive eigenvalue $\lambda_{PF}$, called the Perron-Frobenius
eigenvalue. This eigenvalue has (algebraic and geometric) 
multiplicity one, and the
corresponding left- and right-eigenvectors have strictly positive
entries. All other eigenvalues are strictly smaller than $\lambda_{PF}$ in
magnitude. 

If $\phi$ is a tiling substitution with matrix $M$, and if $M$ is
primitive, then the relative frequencies of all tile types is given by
the right Perron-Frobenius eigenvector of $M$. The relative volumes of
the tile types in a self-similar tiling are given by the left
Perron-Frobenius eigenvector of $M$. The Perron-Frobenius eigenvalue
is $\lambda=\lambda_0^d$, where $\lambda_0$ is the linear stretching
factor.

A substitution $\phi$ {\em forces the border} \cite{K1} if there is an
integer $n$ with the following property. If $t_1$ and $t_2$ are tiles
in $\bT$ of the same type, then the patches $\phi^n(t_1)$ and
$\phi^n(t_2)$ not only agree in $\phi^n(\bT)$, up to translation, but their
nearest neighbors also agree. If a substitution forces the border,
then the tiling space is the inverse limit of the Anderson-Putnam
complex \cite{AP}, and all cohomology classes are generated by the
duals to the vertices, edges, faces, etc., of this complex. If a
substitution does not force the border, then we can define a new tile
set using {\em collared tiles}. A collared tile is a tile
together with a label describing the immediate neighbors of that tile.
For instance, in the tiling $\ldots baab \ldots$, the two $a$'s are different
as collared tiles, in that the first is preceded by a $b$ and followed by
an $a$, while the second is preceded by an $a$ and followed by a $b$. 

Collaring greatly increases the number of tile types, leading to a more
complicated substitution matrix. The new substitution
forces the border while describing the same tiling space as the original
\cite{AP}.

Let $M$ be the matrix of a self-similar substitution and 
let $M'$ be the matrix of the same substitution rewritten in
terms of collared tiles. If $t_1$ and $t_2$ are tiles of the same type, but
are different as collared tiles, then the supertiles 
$\phi^n(t_1)$ and $\phi^n(t_2)$
are the same (up to translation) as collections of regular tiles. 
Viewed as collections of collared tiles, they can disagree
only near the boundary. Thus the differences between substituted collared tiles
of the same uncollared type can grow at most as $\lambda_0^{n(d-1)}$.
This implies that, if an eigenvalue $\lambda'$ of $M'$ is not an
eigenvalue of $M$, then $|\lambda'| \le \lambda_0^{d-1}$. 

A 1-dimensional substitution is {\em proper} if every substituted
letter begins with the same letter, and every substituted letter ends with
the same letter.  For instance, the substitution $a \to abbabb$, $b \to aabab$
is proper, in that all substituted letters start with $a$ and end with
$b$. In a proper substitution, every substituted letter is a return
word. It is always possible to rewrite a substitution to make it
proper, without changing the underlying tiling space. This rewriting
will change the substitution matrix, but the only eigenvalues that can
appear or disappear from the rewriting process are zero and roots of
unity. In particular, the Perron-Frobenius eigenvalue is unchanged. If
a substitution is proper, then the first \v Cech cohomology is 
the direct limit of the transpose of the substitution matrix \cite{BD}.

Pattern-equivariant cohomology was first defined by Kellendonk and Putnam
\cite{Kel, KP} using differential forms, and then recast in \cite{integer}
in terms of cochains. Here we consider rational pattern-equivariant 
cohomology using cochains. 

A $d$-dimensional tiling $\bT$ gives $\R^d$
the structure of a CW complex, with the vertices serving as
0-cells, the edges serving as $1$-cells, the 2-dimensional faces as 
$2$-cells, and so on. We consider rational cellular 
cochains on this CW complex, 
with a $n$-cochain assigning a rational number to each $n$-cell.
\begin{definition}
A rational $0$-cochain is said to be \emph{pattern-equivariant with
    radius $r$} if, whenever $x$ and $y$ are vertices of $\bT$ and
$[{B}_r]^{\bT-x}=[{B}_r]^{\bT-y}$,
  the cochain takes the same values at $x$ and $y$. A \ $0$-cochain is
  \emph{pattern-equivariant} if it is pattern-equivariant with radius
  $r$ for some finite $r$. Pattern-equivariant $n$-cochains for $n>0$ are
  defined similarly -- their values on a $n$-cell depend only on the
  pattern of the tiling out to a fixed finite distance around that
  $n$-cell.
\end{definition}
If $\beta$ is a rational pattern-equivariant $n$-cochain, its
coboundary, $\delta_n(\beta)$, is a rational pattern-equivariant
$(n+1)$-cochain.
\begin{definition}
The rational $n$-th pattern-equivariant
cohomology of $\bT$ is $H^n_{PE}(\bT,\Q)=Ker(\delta_n)/Im(\delta_{n-1})$. 
\end{definition}
A priori this would seem to depend on $\bT$, but if the tiling
space $\Omega$ is minimal, then
$H^n(\bT,\Q)$ is the same for all 
$\bT \in \Omega$ and is isomorphic to $\check
H^n(\Omega, \Q)$ \cite{Kel, KP, integer}. (Even if $\Omega$ is not minimal,
$H_{PE}(\bT,\Q)$ is isomorphic to $\check H^n(\Omega_\bT, \Q)$, 
where $\Omega_\bT$ is the hull of $\bT$.) 

An {\em indicator cochain} for a patch $P$ is a $d$-cochain that evaluates to
1 on a particular tile of $P$, and evaluates to 0 on all other tiles, whether
in $P$ or not. In other words, it 
counts the occurrences of $P$. It's easy to see that all indicator cochains
for a specific $P$ are cohomologous, and that every pattern-equivariant
$d$-cochain is a linear combination of indicator cochains. 
In particular, every cohomology class in $H^d_{PE}(\bT,\Q)$ can be
represented by a linear combination of indicator cochains. 

If two monic polynomials $q(x)$ and $r(x)$ with integer coefficients
have no roots in common, then the {\em resultant} of $q$ and $r$ is
$Res(q,r)= \prod_{i,j}(\lambda_i - \mu_j)=\prod_{i}r(\lambda_i) = \pm
\prod_j q(\mu_j)$, where $q(x) = \prod (x-\lambda_i)$ and
$r(x)=\prod(x-\mu_j)$. This quantity is easily computed and is closely
related to the reduced resultant $D$ discussed earlier. $D$ and
$Res(q,r)$ have the same prime factors and $D$ always divides
$Res(q,r)$, but the two numbers are not always equal. Computing $D$ is
usually more difficult; see \cite{Weise-Taixes} for an algorithm.

\section{Proofs}\label{proofs}

\begin{proof}[Proof of Theorem \ref{general}]
Every pattern-equivariant cochain is a linear combination of indicator
cochains. Furthermore, each $d$-dimensional cochain is closed, and hence
defines a cohomology class. If $\check H^d(\Omega, \Q)$ is $k$-dimensional, 
we can find $k$ patches $P_1, \ldots, P_k$, whose indicator cochains
$\chi_1, \ldots, \chi_k$ represent linearly independent classes in 
$\check H^d(\Omega, \Q)$. Let $[\chi_i]$ be the cohomology class of 
$\chi_i$. 
Likewise, let $\chi_P$ be an indicator
cochain of the patch $P$, and let $[\chi_P]$ be its cohomology class. 
Since $\{[\chi_1], \ldots, [\chi_k]\}$ is a basis for 
$\check H^d(\Omega, \Q)$, 
there exist rational numbers $c_1(P), \ldots, c_k(P)$ such that
$[\chi_P] = \sum_i c_i(P) [\chi_i]$, hence 
\begin{equation}\label{chiP}
\chi_P = \sum c_i(P) \chi_i + \delta \alpha,
\end{equation}
for some pattern-equivariant $(d-1)$-cochain $\alpha$.  
Now apply both sides of equation (\ref{chiP})
to a region $R$. The left hand side gives the number of $P$'s in
$R$, while the right-hand side gives $\alpha(\partial R) + 
\sum_{i=1}^k c_i(P) n_i$, where $\partial R$ is the boundary of $R$, viewed
as a chain.  The term $\alpha(\partial R)$ is our error term
$e(P,R)$, and is bounded in magnitude 
by a constant times the size of $\partial R$. 
\end{proof}

Note that we have actually proved something stronger than Theorem
\ref{general}, since we have obtained a formula for the error. This
will become important when we consider 1-dimensional tilings, and some
special 2-dimensional tilings, where
for appropriate regions we can get the error term to vanish. 

\begin{proof}[Proof of Theorem \ref{ergodic}]

This is an immediate corollary of Theorem \ref{general}. For any region $R$
of volume $V$, let $n$ be the number of times $P$ appears in $R$, and let
$n_i$ be the number of times that $P_i$ appears. 
\begin{eqnarray*}
\frac{n}{V}- \sum f_i c_i(P) 
&=& \frac{e(P,R) +  \sum n_i c_i(P)}{V} - \sum f_i c_i(P) \cr
&=& \frac{e(P,R)}{V} + \sum c_i(P) (\frac{n_i}{V} - f_i).
\end{eqnarray*}
The first term goes to zero as $V^{-1/d}$, while the others converge 
at worst at the rate of the slowest $P_i$. \end{proof}

Before proving Theorem \ref{sub-rate}, which concerns the 
convergence of the frequencies of arbitrary patches, we consider the 
convergence of the frequencies of the basic tile types.

\begin{lemma}\label{supertiles}
There exist
constants $c_0$ and $\nu_0$ such that, if tile type
$i$ occurs with frequency $f_i$, and if $\phi^n(t_j)$ is an 
$n$-th order supertile of volume $V$, then 
\begin{equation}\label{estimate01} 
|\hbox{Number of tiles of type $i$ in $\phi^n(t_j)$}-
  (f_i \times \hbox{Volume of $\phi^n(t_j)$})| \le c_0 |\lambda_2|^n n^{\nu_0},
\end{equation} 
where $\lambda_2$ is the second-largest eigenvalue of $M$.
\end{lemma}

\begin{proof} This is straightforward linear algebra. If $v_n$ is a column
vector whose $i$-th entry is the number of tiles of type $i$ in $\phi^n(t_j)$
minus $f_i$ times the volume of $\phi^n(t_j)$, then $v_{n+1}=Mv_n$ and
$v_n$ has no component
in the Perron-Frobenius eigenspace of $M$. The vector $v_n$ thus grows at
most as $|\lambda_2|^n$ if $M$ is diagonalizable, and at most 
as a polynomial in $n$ times $|\lambda_2|^n$ if $M$ is not diagonalizable. 
\end{proof}

\begin{lemma}\label{balls}
There exist constants $c$ and $\nu$ such that, for any ball $R$ of 
volume $V$,
\begin{equation}\label{estimate1} 
|\hbox{Number of tiles of type $i$ in $R$}-f_i
  V| < c (\log V)^{\nu} V^{\max\left (\frac{d-1}{d}, \frac{\log|\lambda_2|}{
\log \lambda_1}\right )}. 
\end{equation} 
\end{lemma}

\begin{proof} Let $n$ be the smallest integer such that $V$ is less
  than the volume of the smallest $n+1$-st order supertile. This means
  that $V$ is bounded both above and below by a constant times
  $\lambda_0^{nd}$.  We write $R$ as the union of supertiles of order
  $m$, where $m$ ranges from $0$ to $n$, plus a number of partial
  tiles at the boundary of $R$. First we identify any complete $n$-th
  order supertiles inside $R$, then identify the complete $(n-1)$-st
  order supertiles in the remainder of $R$, then identify the complete
  $(n-2)$-nd order supertiles in what is left, and so on.  The number
  of partial tiles is bounded by a constant times the surface area of
  $R$, which goes as $V^{\frac{d-1}{d}}\sim \lambda_0^{n(d-1)}$. The
  number of supertiles of order $m$ is bounded by a constant times the
  surface area of $R$ scaled down by $\lambda_0^m$, hence
  $\lambda_0^{(n-m)(d-1)}$. The contributions of each supertile are
  governed by equation (\ref{estimate01}), so the total contribution
  of all the supertiles of level $m$ is of order
  $\lambda_0^{n(d-1)}\left ( \frac{|\lambda_2|}{\lambda_0^{d-1}}
  \right )^m m^{\nu_0}$.

The sum
$\sum_{m=0}^n \lambda_0^{n(d-1)} \left ( 
\frac{|\lambda_2|}{\lambda_0^{d-1}} \right )^m m^{\nu_0}$ is bounded by
the number of terms times the largest term, hence by a constant times 
$n^{\nu_0+1}$ times either $|\lambda_2|^n$ or $\lambda_0^{n(d-1)}$,
whichever is larger.
Since $V$ is of order $\lambda_0^{nd}$, $|\lambda_2|^n$ is of order 
$V^{\frac{\log|\lambda_2|}{\log(\lambda_1)}}$, while $\lambda_0^{n(d-1)}$
is of order $V^{\frac{d-1}{d}}$. 
\end{proof} 
 
\begin{proof}[Proof of Theorem \ref{sub-rate}]

  First suppose that the substitution forces the border, so that
  $\check H^d(\Omega, \Q)$ is generated by the duals to the various
  (uncollared) tile types \cite{AP}.  This implies that the top
  pattern-equivariant cohomology is generated by the indicator
  cochains of these tile types, and that we can take our patches $P_1,
  \ldots, P_k$ to simply be different types of uncollared tiles.  By
  Theorem \ref{ergodic}, the number of appearances of $P$ converges no
  slower than the slowest of the $P_i$'s. However, the number of each
  tile type in the region $R$ is governed by equation
  (\ref{estimate1}), which is tantamount to Theorem \ref{sub-rate}.

  If the substitution does not force the border, then we rewrite it
  using collared tiles. This changes the substitution matrix, but
  equation (\ref{estimate1}) still applies, albeit with $\lambda_2$
  being the second-largest eigenvalue of the new matrix. If this is
  the same as the second-largest eigenvalue of the old matrix, then we
  are done. If not, then $|\lambda_2| \le \lambda_0^{d-1}=
  \lambda_1^{\frac{d-1}{d}}$, so $\max\left (\frac{d-1}{d},
    \frac{\log|\lambda_2|}{ \log (\lambda_1)}\right ) =
  \frac{d-1}{d}$.  Dividing by $V$, we again get the estimate of
  Theorem \ref{sub-rate}.
\end{proof}

\begin{proof}[Proof of Theorem \ref{cylinder}]

Without loss of generality, we assume that the substitution $\phi$ is proper. 
Imagine applying the indicator cochain $\chi_P$ to $\phi^n(\ell_2)$ 
for some letter
$\ell_2$ that sits in the 3-letter word $\ell_1\ell_2\ell_3$. 
Since the beginning of 
$\phi(\ell_2)$ and $\phi(\ell_3)$ are the same, and since the end of 
$\phi(\ell_1)$ and $\phi(\ell_2)$ are the same, the exact piece 
$\delta\alpha$ in the
decomposition (\ref{chiP}) of $\chi_P$ evaluates to zero on 
$\phi^n(\ell_2)$ if $n$ is sufficiently large. 
This means that counting $P$'s in $\phi^n(\ell_2)$, for $n$ large, 
is purely a cohomological calculation.  

Now let $\beta$ be a rational pattern-equivariant cochain. We say that
$\beta$ is {\em regular} if, for any letter $\ell$ and for
any sufficiently large $n$, $\beta(\phi^n(\ell))$ is an integer. Clearly,
every indicator cochain is regular, but a rational linear combination
of indicator cochains may not be.

Let $\gamma$ be a $1$-cochain, which we can write as a
linear combination $\sum_{P} c_P \chi_P$ of indicator cochains with
rational coefficients. Define the {\em trace of $\gamma$} to be 
\begin{equation}
Tr(\gamma) = \sum_P c_P f_P,
\end{equation}
where $f_P$ is the frequency of the patch $P$. This trace is the 
same as the Ruelle-Sullivan map in the context of \cite{KP}, and is 
closely related to the trace operation on $K^0$ of the $C^*$-algebra
defined by the action of translations on $\Omega_T$.

\begin{lemma}\label{lem1} 
Let $\bT$ be a 1-dimensional tiling obtained from a primitive substitution
$\phi$.
Let $p(x)$ be the minimal polynomial of the operator $A$ that represents the
action of substitution on $H^1_{PE}(\bT,\Q)$, let $q(x)$ be the minimal 
polynomial of the Perron-Frobenius eigenvalue of the substitution matrix,
and let $p(x)=q(x)r(x)$. Let $L$ be a return length. If $\beta$ is a regular 
$1$-cochain and $q(A)[\beta]=0$, then 
\begin{equation}
Tr(\beta) = \frac{u_\beta(\lambda)}{L q'(\lambda) q_0^n}
\end{equation}
for some polynomial $u_\beta$ with integer coefficients.
\end{lemma}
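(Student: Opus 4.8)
The plan is to transport the whole computation into the number field $K=\Q(\lambda)$ and read off the denominators from the arithmetic of the order $\Z[\lambda]$. First I would fix a return word $w$ of return length $L$ and study the sequence $s_n:=\beta(\phi^n(w))$. Because the substitution is proper (assumed without loss of generality), each $\phi^n(w)$ is again a return word, so, exactly as in the discussion preceding the lemma, every coboundary $\delta\alpha$ evaluates to $0$ on it; hence $s_n$ depends only on the class $[\beta]\in H^1_{PE}(\bT,\Q)$, and regularity of $\beta$ forces $s_n\in\Z$ for all $n\ge n_0$. Writing $\phi^*$ (the operator represented by $A$) for the pullback on cohomology and using $\gamma(\phi^n(w))=((\phi^*)^n\gamma)(w)$, I get $s_n=\mathrm{ev}_w(A^n[\beta])$, where $\mathrm{ev}_w(\gamma):=\gamma(w)$ is the well-defined ``count in $w$'' functional on $H^1_{PE}$. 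Since a primitive substitution tiling is uniquely ergodic, $Tr(\beta)$ is the limiting density of counts in the growing patches, so $Tr(\beta)=\lim_n s_n/|\phi^n(w)|=\lim_n s_n/(\lambda^n L)$.

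Next I would exploit $q(A)[\beta]=0$. Since $q$ is irreducible, being the minimal polynomial of the algebraic number $\lambda$, the subspace $\ker q(A)$ is a module over $\Q[x]/(q)=K$, i.e.\ a $K$-vector space on which $A$ acts as multiplication by $\lambda$; in particular $A^n[\beta]=\lambda^n\cdot[\beta]$ in this $K$-structure. The restriction of the $\Q$-linear functional $\mathrm{ev}_w$ to $\ker q(A)$ can be written through the nondegenerate trace form of $K$ as $\gamma'\mapsto \mathrm{Tr}_{K/\Q}(\eta\,\gamma')$ for a suitable $\eta\in K$, so that $s_n=\mathrm{Tr}_{K/\Q}(\lambda^n\zeta)$ with $\zeta:=\eta\cdot[\beta]\in K$. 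Expanding this field trace over the embeddings $\sigma_i$ of $K$ and noting that the conjugates of $\lambda$ are eigenvalues of the substitution matrix (roots of $q$, which divides its characteristic polynomial) and hence strictly subdominant to $\lambda=\sigma_0(\lambda)$, only the dominant term survives in the limit and I obtain $Tr(\beta)=\sigma_0(\zeta)/L$. Thus $Tr(\beta)$ is the real embedding of a single field element $\zeta\in K$ divided by $L$, and everything reduces to controlling the denominator of $\zeta$.

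To finish I would extract that denominator from the integrality $\mathrm{Tr}_{K/\Q}(\lambda^n\zeta)\in\Z$ for all $n\ge n_0$. The elements $\lambda^{n_0},\lambda^{n_0+1},\dots,\lambda^{n_0+\deg q-1}$ generate the $\Z$-module $\lambda^{n_0}\Z[\lambda]$, so the integrality of $s_n$ across this window gives $\mathrm{Tr}_{K/\Q}\big(\zeta\cdot\lambda^{n_0}\Z[\lambda]\big)\subseteq\Z$. By the classical description of the dual of $\Z[\lambda]$ under the trace form (its inverse different), this says $\lambda^{n_0}\zeta\in\frac{1}{q'(\lambda)}\Z[\lambda]$. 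Clearing the power of $\lambda$ with the relation $\lambda^{-1}\in\frac{1}{q_0}\Z[\lambda]$, which follows from $q(\lambda)=0$ with $q_0$ the constant term, yields $\zeta\in\frac{1}{q'(\lambda)q_0^{n_0}}\Z[\lambda]$, i.e.\ $\sigma_0(\zeta)=u_\beta(\lambda)/(q'(\lambda)q_0^{n_0})$ for an integer polynomial $u_\beta$. Setting $n=n_0$ and dividing by $L$ gives the asserted formula.

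I expect the delicate step to be the number-field dictionary of the second paragraph: identifying $\ker q(A)$ as a $K$-vector space on which $A$ is multiplication by $\lambda$, representing the topological counting functional $\mathrm{ev}_w$ through the field trace, and checking that the tiling trace $Tr$ agrees with the dominant real embedding in the limit. Once that dictionary is in place, the denominator bookkeeping is just the standard arithmetic of the order $\Z[\lambda]$, with $q'(\lambda)$ appearing as the inverse different and $q_0^n$ arising from inverting $\lambda$, and it is essentially forced.
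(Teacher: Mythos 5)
Your proof is correct in substance, but it reaches the conclusion by a genuinely different route than the paper. The paper works with an explicit basis of $\ker q(A)$: it uses the fact that $L\Z[\lambda]$ has finite index in the group of tile lengths to define cochains $\xi_i(t)=c_i(t)$ from the coordinates of $|t|/L$ in the basis $1,\lambda,\dots,\lambda^{\deg q-1}$, observes that these transform \emph{exactly} (at the cochain level) under substitution by the companion matrix $C$ of $q$, and then reads off the denominators from the explicitly computed $\lambda$-eigenvector of $C$ (whose entries are integer polynomials in $\lambda$ over $q'(\lambda)$) together with an integrality argument showing $\beta_i\in q_0^{-i}\Z$. You instead never construct a basis: you extract from $q(A)[\beta]=0$ the single scalar $\zeta\in K=\Q(\lambda)$ with $s_n=\mathrm{Tr}_{K/\Q}(\lambda^n\zeta)$ and obtain $q'(\lambda)$ from Euler's description of the trace-dual of $\Z[\lambda]$ (the inverse different) and $q_0^{n}$ from inverting $\lambda$ in $\Z[\lambda]$. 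Your version is more conceptual -- it explains \emph{why} $q'(\lambda)$ appears -- while the paper's is more self-contained and produces the generators $[\xi_i]$ of $\ker q(A)$, which are reused implicitly elsewhere. Two points in your write-up need tightening. First, $\mathrm{ev}_w(\gamma)=\gamma(w)$ is \emph{not} well defined on cohomology: a coboundary need not vanish on a single return word $w$, only on $\phi^n(w)$ for $n$ large (depending on the radius of $\alpha$). The honest statement, which your own setup already gives, is that $s_n$ satisfies the exact linear recurrence $\sum_i q_i s_{n+i}=0$ for $n\ge n_0$ (apply $q(\phi^*)\beta=\delta\alpha'$ to $\phi^n(w)$); since $q$ is irreducible with distinct roots and the $s_n$ are rational, this already forces $s_n=\mathrm{Tr}_{K/\Q}(\lambda^n\zeta)$ for some $\zeta\in K$, so your trace-form representation survives without needing $\mathrm{ev}_w$ to descend to cohomology. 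Second, writing the functional as $\mathrm{Tr}_{K/\Q}(\eta\,\cdot)$ with $\eta\in K$ tacitly uses that the relevant space is one-dimensional over $K$; this holds because the Perron--Frobenius eigenvalue is simple (so restrict to the $K$-line through $[\beta]$, or note $\dim_\Q\ker q(A)=\deg q$), and should be said. With those repairs the argument is complete and the denominator bookkeeping is, as you say, forced.
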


\begin{proof}
The following proof is a small modification of the proof of Theorem 9 of 
\cite{HPC}. Let $s$ be the algebraic degree of $\lambda$ and let
$w$ be a return word of length $L$. 
Since $\phi(w), \phi^2(w), \ldots$ are return words, 
$\lambda L$, $\lambda^2 L, \ldots$
are return lengths, and $L\Z[\lambda]$ is a rank-$s$ Abelian group, hence
a finite-index subgroup
of the span of all the tile lengths. In particular, the length of any tile
$t$ can be written uniquely 
in the form $|t| = L(c_0(t) + c_1(t) \lambda + \ldots + 
c_{s-1}(t) \lambda^{s-1})$, where each $c_i(t)$ is rational. 
Define pattern-equivariant $1$-cochains $\xi_0,\ldots\xi_{s-1}$ 
by $\xi_i(t) = c_i(t)$.
It is not hard to see that
the cohomology classes $[\xi_i]$ are linearly independent (see Lemma 8 of
\cite{HPC}). Under substitution, the cochains
$\xi_0, \ldots, \xi_{s-1}$ transform via the matrix
\begin{equation}
  C = \begin{pmatrix} 0 & 0 & \cdots & 0 & -q_0 \cr
    1 & 0 & \cdots & 0 & - q_1 \cr
    0 & 1 & \cdots & 0 & -q_2 \cr
    \vdots &  & \ddots & \vdots & \vdots \cr
    0 & 0 & \cdots & 1 & -q_{s-1}
\end{pmatrix},
\end{equation}
where $q(x)=x^s + q_{s-1}x^{s-1} + \cdots + q_0$ is the 
minimal polynomial of $\lambda$. Note that the characteristic polynomial of
$C$ is precisely $q(x)$, and hence that the classes 
$[\xi_i]$ span the kernel of $q(A)$. In particular, the cohomology class 
$[\beta]$ is a rational linear combination of the $[\xi_i]$'s. We can
therefore write $\beta = \sum \beta_i \xi_i + \delta \alpha$, where $\alpha$
is a pattern-equivariant 0-cochain and the coefficients $\beta_i$ are 
rational. Applied to any word of the form
$\phi^n(w)$, for $n$ large enough, $\delta \alpha$ yields zero, while
$\beta(\phi^n(w))$ yields an integer. 

Note that, for any integers $n \ge i \ge 0$, 
$q_0^{n-i}\lambda^i$ is a linear combination of $\lambda^n$, 
$\lambda^{n+1}, \ldots, \lambda^{n+s-1}$.
To see this, divide the equation $q(\lambda)=0$ by $\lambda$ to get 
$q_0 \lambda^{-1} = -(\lambda^{s-1} + q_{s-1}\lambda^{s-2} + \cdots + q_1)$.
Taking the ${n-i}$-th power, applying the equation $q(\lambda)=0$ 
to eliminate large powers of $\lambda$, and finally 
multiplying by $\lambda^n$, gives the result. 
This implies that $q_0^{n-i}(\beta - \delta \alpha)$, applied to $\phi^i(w)$, 
yields an integer, which in turn implies that each 
coefficient $\beta_i$ is an integer divided by $q_0^i$.
 
The trace of $\xi_i$ is easily computed. We just apply $\xi$ to $\phi^n(w)$,
divide by the length of $\phi^n(w)$, and take the limit as $n \to \infty$. 
This is equivalent to writing $\lambda^n = \sum_{i=0}^{s-1} c_{n,i}\lambda^i$
and taking the limit of $c_{n,i}/\lambda^n$, and is precisely the $i$-th
entry of the right eigenvector $\vec v_r$ of $C$, with eigenvalue $\lambda$,
normalized so that 
$(1,\lambda,\ldots,\lambda^{s-1})\vec v_r = 1$. This eigenvector is:
\begin{equation}
  \vec v_r = \frac{1}{q'(\lambda)} \begin{pmatrix}
    \lambda^{s-1} + q_{s-1} \lambda^{s-2} + \cdots + q_1  \cr
    \lambda^{s-2} + q_{s-1} \lambda^{s-3} + \cdots + q_2 \cr
    \lambda^{s-3} + q_{s-1} \lambda^{s-4} + \cdots + q_3 \cr
    \vdots \cr
    \lambda + q_{s-1} \cr
    1
\end{pmatrix}.
\end{equation}
Since each entry of $\vec r$ is a polynomial in $\lambda$ divided by
$q'(\lambda)$, and since each $\beta_i$ is an integer divided by a power of
$q_0$, the trace of $\beta$ is of the desired form. 
\end{proof}

\begin{lemma}\label{lem2}
Let $\beta$ be any pattern-equivariant 1-cochain. 
If $r(A)[\beta]=0$, then $Tr(\beta)=0$. 
\end{lemma}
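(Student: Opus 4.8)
The plan is to show that the trace is a left eigenvector of the substitution action $A$ with eigenvalue $\lambda$, and then feed the hypothesis $r(A)[\beta]=0$ through this relation. First I would establish the key identity
\begin{equation}
Tr(A[\beta]) = \lambda\, Tr(\beta)
\end{equation}
for every $d$-cochain $\beta$. This follows from the limit formula for the trace already used in the proof of Lemma \ref{lem1}, namely $Tr(\beta) = \lim_{n\to\infty} \beta(\phi^n(w))/|\phi^n(w)|$ for a return word $w$ of length $L$. Since $A$ is the map induced by the substitution, evaluating the substituted cochain on $\phi^n(w)$ amounts to evaluating $\beta$ on $\phi^{n+1}(w)$; writing $\beta(\phi^{n+1}(w))/|\phi^n(w)|$ as the product of $\beta(\phi^{n+1}(w))/|\phi^{n+1}(w)|$ with $|\phi^{n+1}(w)|/|\phi^n(w)|$ and passing to the limit yields the identity. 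In the self-similar case the length ratio is exactly $\lambda$, since $|\phi^n(w)| = \lambda^n L$, so no asymptotics are even needed there.

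Next I would note that $Tr$ descends to a functional on $H^d_{PE}(\bT,\Q)$: if $\alpha$ is a pattern-equivariant $(d-1)$-cochain then $Tr(\delta\alpha) = \lim_{n\to\infty} \alpha(\partial\phi^n(w))/|\phi^n(w)| = 0$, because $\alpha$ takes only finitely many values on the bounded boundary $\partial\phi^n(w)$ while $|\phi^n(w)|\to\infty$. Hence $Tr$ is well defined on cohomology classes, and iterating the eigenrelation gives $Tr\circ A^j = \lambda^j\, Tr$, so that $Tr(r(A)[\beta]) = r(\lambda)\, Tr(\beta)$ for the polynomial $r$. The hypothesis $r(A)[\beta]=0$ then forces $r(\lambda)\, Tr(\beta) = 0$.

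To finish, I would observe that $r(\lambda)\neq 0$. Indeed $q$ is the minimal polynomial of $\lambda$, so $q(\lambda)=0$, and by construction $q$ and $r$ share no roots; therefore $\lambda$ is not a root of $r$. Dividing by the nonzero scalar $r(\lambda)$ gives $Tr(\beta)=0$, as claimed. I expect the only real content to lie in the first paragraph: the eigenrelation $Tr\circ A = \lambda\, Tr$ hinges on correctly matching the convention under which $A$ represents substitution (so that it corresponds to evaluation on $\phi^{n+1}(w)$ rather than its inverse) and on the compatibility of the limit defining $Tr$ with that action. Once that identity is in hand, the remainder is a one-line spectral argument.
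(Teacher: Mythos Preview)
Your argument is correct. It is essentially the dual formulation of the paper's own proof: the paper argues on the vector side, observing that $r(A)[\beta]=0$ forces $[\beta]$ to lie in the generalized eigenspaces of $A$ for eigenvalues other than $\lambda_{PF}$, so that $(A^n\beta)(\ell)/\lambda_{PF}^n = \beta(\phi^n(\ell))/\lambda_{PF}^n \to 0$ and hence $Tr(\beta)=0$. You instead work on the functional side, establishing $Tr\circ A = \lambda\,Tr$ directly from the limit formula and then concluding $0 = Tr(r(A)[\beta]) = r(\lambda)\,Tr(\beta)$ with $r(\lambda)\neq 0$. Both routes ultimately rest on the same fact, that $\lambda$ is not a root of $r$; your version has the minor advantage of bypassing any appeal to a generalized eigenspace decomposition of $H^d_{PE}$, while the paper's version makes the dynamical content (sub-$\lambda^n$ growth of $\beta(\phi^n(\ell))$) more visible.
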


\begin{proof}
If $r(A)[\beta]=0$, then the cohomology class $[\beta]$ has no component
that scales under $n$-fold substitution as $\lambda^n$. Thus
$\lim \beta(\phi^n(\ell))/\lambda^n = 
\lim (A^n(\beta))(\ell)/\lambda^n = 0$.
Since, in the limit, $\beta$ averages to zero on patches of the form
$\phi^n(\ell)$, the trace of $\beta$ is zero.
\end{proof}

Finally, let $\chi_P$ be an indicator cochain. Since $\chi_P$ is regular, 
$A(\chi_P)
= \chi_P \circ \phi$ is regular, and so is any polynomial in $A$ applied 
to $\chi_P$. In particular, $Q(A)q(A)\chi_P$
and $R(A)r(A)\chi_P$ are regular. Since $[r(A)Q(A)q(A)\chi_P] = p(A)[Q(A)\chi_P]
=0$, the trace of $R(A)r(A)\chi_P$ is of the form indicated 
in Lemma \ref{lem1}. Likewise, the trace of $Q(A)q(A)\chi_P$ is zero by
Lemma \ref{lem2}.  Since $D\chi_P = Q(A)q(A) \chi_P + R(A)r(A) \chi_P$, the
trace of $\chi_P$ is $D^{-1}$ times something of the form indicated in 
Lemma \ref{lem1}, which completes the proof of 
Theorem \ref{cylinder}. 
\end{proof} 

\section{Examples}\label{examples}

\subsection{The Thue-Morse Tiling} 

The Thue-Morse tiling is a 1-dimensional tiling given by the
substitution $\phi_1(a)= ab$, $\phi_1(b)=ba$. 
This substitution is not proper, but being proper is not a requirement
for Theorems \ref{general} and \ref{cylinder}.
In the case of Thue-Morse, it is not difficult to compute the cohomology
directly by a variety of methods. The first rational PE-cohomology is
$\Q^2$, and is generated by the indicators of the patches $P_1=ab$ and $P_2=aa$.
The matrix $A$ (in an appropriate basis) is $\begin{pmatrix} 1&1 \cr 2&0
\end{pmatrix}$ with eigenvalues $\lambda_{PF}=2$ and $\lambda_2=-1$.  
Note that $q(x)=x-2$, $r(x)=x+1$, and $D=3$, insofar as $3=(x+1)-(x-2)$. We
will show directly that the appearance of every patch in a return word is 
governed, up to coboundaries, by the appearance of $P_1$ and $P_2$. Note that
the frequencies of $P_1$ and $P_2$ are $1/3$ and $1/6$, respectively, which 
are {\em not} in $Z[1/2]$. The factor of $D^{-1}$ in Theorem \ref{cylinder} is  
indeed necessary. 

Let $P_3 = aababb$. We will show how the appearance of $P_3$ is controlled by
the appearance of $P_1$ and $P_2$. For definiteness, pick indicator cochains
$\chi_i$ ($i=1$, 2, 3) that equal one on the first letter of $P_i$ and are zero
elsewhere.  We will show that, for any region $R$, $\chi_3(R) = 
\frac{7}{8} \chi_2(R) - \frac{1}{8} \chi_1(R) +$ boundary terms.  

We begin by evaluating each $\chi_i$ on $\phi_1^3(a)=abbabaab$ and 
$\phi_1^3(b)=baababba$. The results are:
\begin{eqnarray}
\chi_1(\phi_1^3(a)) &=& 3 \qquad  \chi_1(\phi_1^3(b))= \begin{cases}
2 & \hbox{if $\phi_1^3(b)$ is followed by $\phi_1^3(a)$} \\
3 & \hbox{if $\phi_1^3(b)$ is followed by $\phi_1^3(b)$} 
\end{cases} \cr
\chi_2(\phi_1^3(a)) &=& 1 \qquad  \chi_2(\phi_1^3(b))= \begin{cases}
2 & \hbox{if $\phi_1^3(b)$ is followed by $\phi_1^3(a)$} \\
1 & \hbox{if $\phi_1^3(b)$ is followed by $\phi_1^3(b)$} 
\end{cases} \cr
\chi_3(\phi_1^3(a)) &=& 
\begin{cases}
0 & \hbox{if $\phi_1^3(a)$ is followed by $\phi_1^3(a)$} \\
1 & \hbox{if $\phi_1^3(a)$ is followed by $\phi_1^3(b)$} 
\end{cases}
 \qquad  \chi_3(\phi_1^3(b))= 1
\end{eqnarray}
Next, suppose that $w$ is a return word. The number of 
times that the patch $ab$ appears in $\phi_1(w)$ equals the number 
of times that $ba$ appears. Also, $\phi_1(w)$ has as many appearances of $a$ as
of $b$. Thus, if we treat $\phi_1^4(w)$ as
the concatenation of 3rd order ``supertiles'' of the form 
$\phi_1^3(a)$ and $\phi_1^3(b)$, then there are equal numbers of $a$ and
$b$ type supertiles, and the number of $a$-supertiles that are followed
by $b$-supertiles equals the number of $b$-supertiles that are followed
by $a$-supertiles. If there are $k_1$ $a$-supertiles, $k_2$ of which are
followed by $b$-supertiles, then
\begin{eqnarray}
\chi_1(\phi_1^4(w)) &=& 6k_1 - k_2; \cr 
\chi_2(\phi_1^4(w)) &=& 2 k_1 + k_2 \cr
\chi_3(\phi_1^4(w)) &=& k_1 + k_2 = -\frac{1}{8}\chi_1(\phi_1^4(w))
+ \frac{7}{8} \chi_2(\phi_1^4(w)).
\end{eqnarray}

Finally, let $R$ be any region. We can always write $R=p\phi_1^4(w)s$, where
$w$ is a return word and the prefix $p$ and the suffix $s$ each have length
at most 48. The number $n_i$ of occurrences of $P_i$ in the prefix and suffix
do {\em not} have to satisfy $n_3 = \frac{-n_1}{8} + \frac{7n_2}{8}$, but
the deviation from this rule is computable from the local patterns $p$ and 
$s$. We therefore have a pattern-equivariant 0-cochain $\alpha$, with
radius at most 48, such that 
\begin{equation}
\chi_3 = \frac{-\chi_1}{8} + \frac{7 \chi_2}{8} + \delta \alpha.
\end{equation}  

The exact same argument would work for any patch $P_4$ of length at most 8. 
We just have to evaluate $\chi_4$ on $\phi_1^3(a)$ and $\phi_1^3(b)$, and use
this information to evaluate $\chi_4$ on $\phi_1^4(w)$ for any return word $w$,
with the answer being a linear function of $k_1$ and $k_2$. 
Since $\chi_1(\phi_1^4(w))$ and $\chi_2(\phi_1^4(w))$ are linearly independent
functions of $k_1$ and $k_2$, we can always express $\chi_4(\phi_1^4(w))$
as a linear combination of $\chi_1(\phi_1^4(w))$ and $\chi_2(\phi_1^4(w))$. 

If we have a patch $P_5$ that is longer than 8 letters, we just have to 
work with higher-order supertiles. If $2^{n-1} < |P_5| \le 2^n$, we count
the appearances of $P_1$, $P_2$ and $P_5$ on $\phi_1^n(a)$ and $\phi_1^n(b)$,
and write an arbitrary word as $p \phi_1^{n+1}(w) s$, where $w$ is a return word
and $p$ and $s$ are words of length at most $6 \cdot 2^n$. 
 
\subsection{Thue-Morse variants}

A couple of variants on the Thue-Morse substitution help illustrate
the extent to which the bounds of Theorem \ref{cylinder} are sharp. In
both cases, as with the original Thue-Morse substitution, the
stretching factor is a power of 2, both tiles can be given length 1,
and there are return words of length 1, 
so Theorem \ref{cylinder} essentially says that all patch
frequencies live in $\frac{1}{D} \Z[1/2]$. 

The first variant is the substitution $\phi_2=\phi_1^4$, or explicitly
$\phi_2(a) =abbabaabbaababba$, 
$\phi_2(b)=baababbaabbabaab$. 
The tilings for $\phi_2$ are exactly the same as those for $\phi_1$. 
In particular, the
appearance of the patches $ab$ and $aa$ govern the appearance of all patches,
and all patch frequencies live in $\frac{1}{3}\Z[1/2]$. 

However, the substitution, acting on cohomology, has eigenvalues
16 and 1 rather than 2 and $-1$, and the number theoretic constant $D$ 
is now 15 rather than 3. 
A naive application of Theorem \ref{cylinder} says
that all patch frequencies live in $\frac{1}{15}\Z[1/2]$, which is true,
but this is not 
sharp; the factor of 5 in the denominator is spurious. 

Now consider the substitution $\phi_3(a)=aaaaaaaabbbbbbbb=a^8b^8$, 
$\phi_3(b) = a^7b^9$. This tiling space also has $H^1=\Q^2$, and the 
substitution, acting on cohomology, has eigenvalues 16 and 1. 
However, in this example there {\em are} patches whose frequencies are not
in $\frac{1}{3}\Z[1/2]$. In particular, the letter $a$ occurs with frequency
$\frac{7}{15}$, while the letter $b$ occurs with frequency $\frac{8}{15}$. 

Theorem \ref{cylinder} appears to be the strongest estimate that can
be made using only the minimal polynomial of $A$, or equivalently using the
eigenvalues of $A$ (and the size of any Jordan blocks).  However, as
$\phi_2$ shows, we can sometimes obtain stronger estimates by studying
the eigen{\em vectors\/} of $A$. The key is decomposing the cohomology class of 
an arbitrary indicator cochain into the sum of two pieces, one that is 
annihilated by $q(A)$ and one that is annihilated by $r(A)$. Although we 
can always do this using rational coefficients with denominator $D$, 
some matrices $A$ allow us to do better. 

\subsection{A Fibonacci variant}

Theorems \ref{general}--\ref{cylinder} 
were stated in terms of the rational cohomology of the tiling
space. This avoids complications relating to torsion and divisibility. 
However, there are times when integer-valued cohomology can be used 
more effectively.

Consider the 1-dimensional substitution on two letters $\phi(a)=baaab$, 
$\phi(b)=aba$. This is an irreducible Pisot substitution, with 
substitution matrix $A = \begin{pmatrix} 3 & 2 \cr 2 & 1 \end{pmatrix}$
and stretching factor $\lambda = 2 + \sqrt{5}$, which is the cube of 
the golden mean. The first cohomology is $\check H^1(\Omega_{\phi})=\Z^3$,
with generators corresponding to the indicator cochains of $P_1=a$, $P_2=b$ 
and $P_3=ab$.

Since every indicator cochain is cohomologous to an integer linear combination
of $\chi_1$, $\chi_2$ and $\chi_3$, every patch frequency is an integer 
linear combination of $f_1 = \frac{1}{2 \sqrt{5}}$, 
$f_2 = \frac{\sqrt{5}-1}{4\sqrt{5}}$, and $f_3=\frac{1}{4 \sqrt{5}}$, where
we have chosen the tiles to have length $|a|=\sqrt{5}+1 = \lambda-1$ and
$|b|=2$. In other words, all patch frequencies are of the form
$\frac{(m + n \sqrt{5})}{4 \sqrt{5}}$, where $m$ and $n$ are integers.

This is stronger than applying Theorem \ref{cylinder} 
with the return length $L=|b|=2$, which only says
that patch frequencies must be of the form 
$\frac{(m + n \sqrt{5})}{16 \sqrt{5}}$.

\subsection{A random tiling}

Next consider a random 1-dimensional tiling $\bT$, with two tile
types, each of length 1. We assume that the label of the tiles are
chosen independently, with each tile having a probability $p$ of being
type $a$ and a $1-p$ probability of being type $b$. With probability
one, every finite word in $a$ and $b$ appears in $\bT$, with a
well-defined overall frequency given by the Bernoulli measure. If $p$
is transcendental, then the frequencies of $a$, $aa$, $aaa$, etc. are
all linearly independent over the rationals. This implies that the
pattern-equivariant cohomology of $\bT$ is infinitely generated.
\footnote{Strictly speaking this is not a consequence of Theorems
  \ref{general} and \ref{ergodic}, since $\bT$ is not
  repetitive. However, $\bT$ being in the support of the Bernoulli
  measure is an adequate substitute for repetitivity and unique
  ergodicity.}  
If $p$ is algebraic, or even rational, then the PE
cohomology is {\em still} infinitely generated, since the set of
possible patches is independent of $p$ as long as $0<p<1$.

\subsection{The equithirds tiling}

We have limited ourselves to one-dimensional examples so far because, 
in one dimension, it is possible to eliminate the error term in 
Theorem \ref{general} by choosing an appropriate return word. In two dimensions,
that is usually much more difficult. However, some two-dimensional tilings, 
like
the half-hex, admit regions whose boundaries are homologically trivial.
Another such example is the equithirds tiling, discovered independently
by Ludwig Danzer (unpublished) and Bill Kalahurka \cite{bill}. 

\begin{figure}
\includegraphics[height=1.8truein]{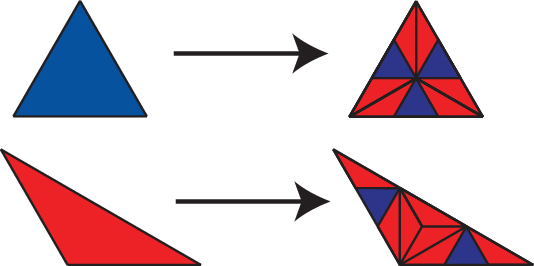}
\caption{The Equithirds Substitution}
\label{fig1}
\end{figure}

The equithirds tiling is a two-dimensional substitution tiling based on the
substitution of Figure \ref{fig1}.  
Each tile is either an equilateral
triangle of side length 1, or a 30-30-120 triangle with sides of length
1, 1, and $\sqrt{3}$. The equilateral triangle appears in two orientations,
while the isosceles triangle appears in six orientations. All triangles
have area $\sqrt{3}/4$. The 
isosceles triangles come in pairs, forming rhombi, and the equilateral
triangles also come in pairs, also forming rhombi. 

The set of vertices of an equithirds tiling is a translate of the
triangular lattice $L$ generated by $(1,0)$ and $(\frac{1}{2},
\frac{\sqrt{3}}{2})$. The unit cell of this lattice has area 
$V_0=\frac{\sqrt{3}}{2}$, or twice the area of a triangle. 
The vertices of first-order supertiles comprise
a translate of $3L$, and vertices of $n$-th order supertiles comprise
a translate of $3^nL$. The locations of $n$-th order vertices mod
$3^nL$ gives a map from the tiling space to the 2-torus, and the
collection of all such locations gives a map from the tiling space to
$\Lim (L, \times 3)$, which is topologically the product of two
3-adic solenoids. This map is a measurable conjugacy, and one might
expect all patch frequencies to live in
$\frac{1}{V_0}\Z[1/9]$.\footnote{We write $\Z[1/9]$ rather than
  $\Z[1/3]$ to emphasize that substitution corresponds to
  multiplication by $9$, but of course the set of 9-adic rational numbers
is the same as the set of 3-adic rationals.}

This is not the case. Each orientation of equilateral triangle, and each
orientation of isosceles triangle, actually appears with frequency 
$\frac{1}{4V_0}$. Overall, 3/4 of the triangles are isosceles, while 1/4 are
equilateral. The appearance of a factor of 4 is analogous to the 
appearance of 1/3 in the patch frequencies of the Thue-Morse tiling.

\begin{figure}
\includegraphics[height=1.8truein]{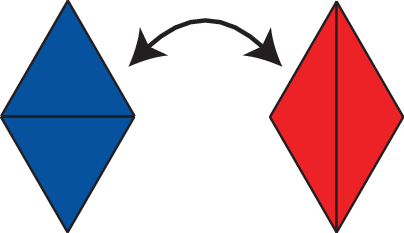}
\caption{Triangles Assemble into Rhombi of Two Types}
\label{fig2}
\end{figure}

\begin{figure}
\includegraphics[height=1.8truein]{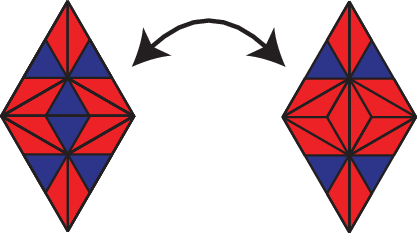}
\caption{Substituted Rhombi}
\label{fig3}
\end{figure}

Moreover, the patches shown in Figure \ref{fig2}, and these patches
rotated by multiples of 120 degrees, play a role analogous to return
words. When substituted one or more times, the patterns on opposite
legs of the rhombus match perfectly, as seen in Figure \ref{fig3}.
The term $\delta \alpha$ in equation (\ref{chiP}) vanishes when
applied to a sufficiently substituted rhombus. This means that the
number of appearance of any patch $P$ in a sufficiently substituted
rhombus is determined exactly by the number of appearances of the
control patches $P_1, \ldots, P_k$.
 
The equithirds tiling forces the border, which makes it easy to
compute the cohomology \cite{bill} using the methods of Anderson and Putnam
\cite{AP}. The answers are that $\check H^1(\Omega) = \Z[1/3]^2$ and
$\check H^2(\Omega) = \Z[1/9] \oplus \Z^3$, hence $\check H^2(\Omega,
\Q)= \Q^4$, or $k=4$.  We can choose our control patches as follows.
Let $P_1$ be an equilateral triangle with horizontal base, let $P_2$
be the second rhombus shown in Figure \ref{fig2}, and let $P_3$ and
$P_4$ be rotated versions of $P_2$. The frequency of each $P_i$ is
$\frac{1}{4V_0}$, and every patch frequency lives in
$\frac{1}{4V_0}\Z[1/9]$.

Finally, it is possible to directly understand the various terms in
$\check H^2(\Omega)$. The patches shown in Figure \ref{fig3}, obtained
by substituting the rhombi of Figure \ref{fig3}, are identical
except in the very middle. Substituting again yields
even bigger patches that are identical except for one rhombus in the
middle. Taking a limit we obtain two tilings that agree completely
except for that one rhombus. One of the classes in $\check
H^2(\Omega)$, represented by $\chi_1-\chi_2$, measures the difference
between these two tilings, and is invariant under substitution. 
Likewise, $\chi_1-\chi_3$ and $\chi_1 -
\chi_4$ measure the same thing rotated by 120 and 240
degrees. Each of these classes has trace zero. 
A fourth class, $\chi_1+\chi_2+\chi_3+\chi_4$, counts area in
units of $V_0$, scales by 9 under substitution, and has trace 
$\frac{1}{V_0}$. This corresponds to 
a generator of $\Z[1/9]$. These four classes span $\check H^2(\Omega, \Q)$, 
but some indicator cochains cannot be written as integer linear combinations.
For instance, $\chi_1 = \frac{1}{4}\left [ (\chi_1+\chi_2+\chi_3+\chi_4) +
(\chi_1-\chi_2) + (\chi_1-\chi_3) + (\chi_1-\chi_4) \right ]$, so the
trace of $\chi_1$ (and likewise $\chi_{2,3,4}$) is $\frac{1}{4V_0}$.

\subsection{The pinwheel tiling}

As noted earlier, the pinwheel tiling \cite{pinwheel} falls outside the
assumptions of this paper. It is not translationally finite, and each 
patch of a tiling appears with frequency zero. To make sense of patch
frequencies, we must count how many times a patch {\em or a ratated 
version of that patch} appears per unit area. This involves looking at
rotationally invariant indicator cochains. (These are a special
case of the pattern-equivariant 
cochains with a representation developed in \cite{BG}.)
Since the coboundary
of a rotationally invariant cochain is rotationally invariant, we can 
define a rotationally invariant pattern-equivariant cohomology 
$H_{PE,rot}^n(\bT,\Q)$ to be the closed rotationally invariant cochains 
modulo coboundaries of rotationally invariant cochains. 

The correspondence between $H_{PE,rot}^n(\bT,\Q)$ and the \v Cech cohomology
of the pinwheel tiling space is subtle.
$H^2_{PE,rot}(\bT,\Q)$ actually corresponds to $\check H^3(\Omega, \Q)$,
where $\Omega$ is a compactification of the hull of a pinwheel tiling, using
a metric where two tilings are close if they agree on a big ball up to a 
small rigid motion (which may include a small rotation). See \cite{BDHS}
for this correspondence and for a computation of $\check H^*(\Omega, \Q)$, 
with the result that $\check H^3(\Omega, \Q)=\Q^8$, hence that $H_{PE, rot}^2
(\bT, \Q)=\Q^8$, hence that there are eight patches 
that control the appearance of all other patches, up to 
boundary terms. 

\section*{Acknowledgments}

I thank Marcy Barge, Henk Bruin, Natalie Frank, Leslie Jones, Bill
Kalahurka and the
participants in the 2010 CIRM Workshop on Subshifts and Tilings for
helpful discussions. I also thank Bill Kalahurka for figures of the
equithirds tiling, and the anonymous referee for pointing out an
error in an earlier version of Theorem \ref{sub-rate}. 
This work is partially supported by the National
Science Foundation.


\bigskip

Department of Mathematics, University of Texas, Austin, TX 78712, USA \\
sadun@math.utexas.edu


\begin{thebibliography}{BDHS}
\bibitem[A]{Aus} J.\ Auslander, Minimal flows and their extensions,
  {\em North-Holland Mathematical Studies}, vol. 153, North-Holland,
  Amsterdam, New York, Oxford, and Tokyo, (1988).

\bibitem[AP]{AP} J.\ E.\ Anderson and I.\ F.\ Putnam, Topological invariants
  for substitution tilings and their associated $C^*$-algebras, {\em
    Ergodic Theory \& Dynamical Systems} \textbf{18} (1998), 509--537.




\bibitem[BBG]{BBG} J.~Bellissard, R.~Benedetti, and J.-M.~Gambaudo, 
Spaces of tilings, finite telescopic approximations
and gap-labelling, Comm. Math. Phys. {\bf 261} (2006), 1-–41.

\bibitem[BBJS]{HPC} M.~Barge, H.~Bruin, L.~Jones and L.~Sadun, 
Homological Pisot Substitutions and Exact Regularity. Available as
arXiv:1001.2027v1.


\bibitem[BD]{BD} M.\ Barge and B.\ Diamond, Cohomology in
  one-dimensional substitution tiling spaces, {\em
    Proc. Amer. Math. Soc.} {\bf 136} (6) (2008), 2183-2191.

\bibitem[BDHS]{BDHS} M.~Barge, B.~Diamond, J.~Hunton and L.~Sadun, 
Cohomology of Substitution Tiling Spaces, to appear in 
Ergodic Theory and Dynamical Systems. Available at arXiv:0811.2507.

\bibitem[CGU]{Jean-Marc} J.-R.~Chazottes, J.-M.~Gambaudo, and E. Ugalde,
On the Geometry of Ground States and Quasicrystals in Lattice Systems,
preprint {\tt arXiv:0802.3661}.

\bibitem[CS]{CS} A.~Clark and L.~Sadun, When Shape Matters: 
Deformations of Tiling Spaces, {\em Ergodic Theory and Dynamical Systems} 
{\bf 26} (2006) 69--86. 








\bibitem[G]{Gahler} F. Gaehler, Lectures given at workshops {\em
    Applications of Topology to Physics and Biology},
  Max-Planck-Institut fr Physik komplexer Systeme, Dresden, June
  2002, and {\em Aperiodic Order, Dynamical Systems, Operator Algebras
    and Topology}, Victoria, British Columbia, August, 2002.






\bibitem[Kal]{bill} W.~Kalahurka, 
Rotational Cohomology and Total Pattern Equivariant
Cohomology of Tiling Spaces Acted on by Infinite
Groups, PhD thesis in Mathematics, University of Texas, 2010. 

\bibitem[Kel1]{K1} J.~Kellendonk, Non-commutative Geometry of Tilings and
Gap-Labeling, {\em Rev. Math. Phys.} {\bf 7} (1995) 1133--1180.

\bibitem[Kel2]{Kel} J.\ Kellendonk,
Pattern-equivariant functions and cohomology,
{\em J. Phys. A.} {\bf 36} (2003), 1--8.

\bibitem[KP]{KP} J.\ Kellendonk and I.\ Putnam, The Ruelle-Sullivan map for
$\R^n$-actions, {\em Math. Ann.} {\bf 334} (2006), 693--711.

\bibitem[Po]{Pohst} M.~Pohst, A Note on Index Divisors, in ``Computational
Number Theory'', A.~ Pehto, M.~Pohst, H.~Williams and H.~Zimmer, ed.  (1991)
de Gruyter, Berlin, 173--182.

\bibitem[Rad]{pinwheel}C.\ Radin, The Pinwheel Tilings of the Plane, 
Annals of Math. {\bf 139} (1994), 661--702.

\bibitem[Ran]{BG} B. Rand, Pattern-Equivariant Cohomology of Tiling 
Spaces with Rotations, PhD thesis in Mathematics, University of 
Texas, 2006. 

\bibitem[Sa1]{inverse} L.\ Sadun,  Tiling Spaces are Inverse Limits. 
{\em J. Math. Phys.} {\bf 44} (2003), 5410--5414.

\bibitem[Sa2]{integer} L.\ Sadun, Pattern-equivariant cohomology with
  integer coefficients. {\em Ergodic Theory \& Dynamical Systems} {\bf
      27} (2007), 1991--1998.




\bibitem[WT]{Weise-Taixes} X. Taixes i Ventosa, G Wiese, Computing
Congruences of Modular Forms and Galois Representations 
Modulo Prime Powers, preprint arxiv:0909.2724v2.  

\end{thebibliography}
\end{document}